\newtheorem{theorem}{Theorem}[section]
\newtheorem{lemma}[theorem]{Lemma}
\numberwithin{equation}{section}
\numberwithin{equation}{section}
\newtheorem{proposition}[theorem]{Proposition}
\newfont{\EUL}{eufm10 scaled 1000}
\newcommand{\be}{\begin{equation}}
\newcommand{\ee}{\end{equation}}
\newcommand{\bl}{\begin{lemma}}
\newcommand{\el}{\end{lemma}}
\newcommand{\bp}{\begin{proof}}
\newcommand{\ep}{\end{proof}}
\newcommand\R{\mathbb{R}}
\newcommand\C{\mathbb{C}}
\renewcommand\H{\mathbb{H}}
\newcommand\Ker{{\rm Ker}}
\newcommand\su{\mathfrak{su}}
\renewcommand\sp{\mathfrak{sp}}
\renewcommand\sl{\mbox{\EUL sl}}
\newcommand\g{\mathfrak{g}}
\newcommand\h{\mathfrak{h}}
\newcommand\z{\mathfrak{z}}
\renewcommand\u{\mathfrak{u}}
\newcommand\m{\mathfrak{m}}
\newcommand\n{\mathfrak{n}}
\renewcommand\t{\mathfrak{t}}
\renewcommand\c{\mathfrak{c}}
\renewcommand\l{\mathfrak{l}}
\newcommand\f{\mathfrak{f}}
\newcommand\s{\mathfrak{s}}
\newcommand\ttt{{\tilde{\theta}}}
\renewcommand\span{{\rm span}\,}
\newcommand\ad{{\rm ad}}
\newcommand\Ad{{\rm Ad}}
\def\sideremark#1{\ifvmode\leavevmode\fi\vadjust{%            The remark
\vbox to0pt{\hbox to 0pt{\hskip\hsize\hskip1em%               will appear only
\vbox{\hsize1cm\tiny\raggedright\pretolerance10000%          on the side
\noindent #1\hfill}\hss}\vbox to8pt{\vfil}\vss}}}%           in 1cm
\begin{document}
%\begin{comment}
%%%%%%%%%%%%%%%%%%%%%%%%%%%%%%%%%%%%%%%%%%%%%%%%%%%%%%%%%%%%%%
%
%               T  I  T  L  E
%
%%%%%%%%%%%%%%%%%%%%%%%%%%%%%%%%%%%%%%%%%%%%%%%%%%%%%%%%%%%%%%
%%%%%%%%%%%%%%%%%%%%%%%%%%%%%%%%%%%%%%%%%%%%%%%%%%%%%%%%%%%%%%
\title[Homogeneous Hypercomplex structures and the Joyce's construction]{Homogeneous
Hypercomplex structures and the Joyce's construction}
\author{Lucio Bedulli - Anna Gori - Fabio Podest\`a}
\address{Dipartimento di Matematica
- Sapienza, Universit\`a di Roma\\
p.le Aldo Moro 2\\00185 Roma\\Italy\\{ \it e-mail address}: {\tt bedulli@mat.uniroma1.it}}
\address{Dipartimento di Matematica "U.Dini"
- Universit\`a di Firenze\\
V.le Morgagni 67/A\\50100 Firenze\\Italy\\{ \it e-mail address}: {\tt gori@math.unifi.it}}
\address{Dipartimento di Matematica "U.Dini"
- Universit\`a di Firenze\\
V.le Morgagni 67/A\\50100 Firenze\\Italy\\{ \it e-mail address}: {\tt podesta@math.unifi.it}}
\thanks{{\it Mathematics Subject
Classification.\/}\ 53C26, 53C30}
\date{}
\keywords{Hypercomplex structures, Homogeneous spaces, Hyperk\"ahler with torsion.}
\begin{abstract} We prove that any invariant hypercomplex structure on
  a homogeneous space $M = G/L$ where $G$ is a compact Lie group is
  obtained via the Joyce's construction, provided that
  there exists a hyper-Hermitian naturally reductive invariant metric on $M$.  \end{abstract}
\maketitle
%\end{comment}
\section{Introduction}
A hypercomplex structure $\mathcal Q$ on a manifold $M$ is a set of
integrable complex structures on $M$ of the form $\mathcal Q = \{ aI +
bJ + cK;\ a^2 + b^2 + c^2 = 1\}$, where $I$, $J$, $K=IJ$ are complex structures satisfying $IJK = -1$.
A Riemannian metric $g$ on $M$ is called hyper-Hermitian if it is
Hermitian w.r.t. every complex structure in $\mathcal Q$;
it is easy to see that hyper-Hermitian metrics always exist.
A hyper-Hermitian manifold ($M,g,\mathcal Q$) admits a HKT-structure,
where HKT means hyper-K\"ahler with torsion, when there exists a
metric connection $\nabla$ that leaves every complex structure in $\mathcal Q$ parallel and whose
torsion tensor $T$ is totally skew.
When a HKT-structure exists, the connection $\nabla$ is unique and it
is called the HKT-connection; actually it coincides with the Bismut
connection of every complex structure in $\mathcal Q$ (see e.g. \cite{Gau}). We refer the reader to \cite{GP,V1,V2} 
for equivalent definitions and basic properties of HKT-structures,
which have also played an important role in theoretical physics (see e.g. \cite{HP1, HP2}).\par
Hyperk\"ahler structures are a special case of HKT-structures, namely
when the HKT-connection coincides with the Levi-Civita connection of
$g$, i.e. it is torsionfree.
Actually, the hyperk\"ahler condition is extremely stringent and
examples are rare; for instance homogeneity forces the manifold to be
flat (see e.g. \cite{Besse}). On the contrary, there are plenty of examples of HKT-structures,
even when $M$ is supposed to be compact and homogeneous. In
\cite{SSTV} the authors described and classified all the left
invariant hypercomplex structures on compact Lie groups, for which
there exists a biinvariant, hyper-Hermitian Riemannian metric. Joyce
\cite{Joyce} then described a way to construct  hypercomplex
structures on homogeneous spaces of compact Lie groups; this
construction, which we recall in section \ref{Jcon}, has been then
used and revisited by several authors, see e.g. \cite{OP,PP}. Our
first main result states that, if $G$ is a compact Lie group, then
every $G$-invariant hypercomplex structure on a homogeneous $G$-space
is obtained via the Joyce construction, provided that there exists a
naturally reductive $G$-invariant, hyper-Hermitian metric;
this metric automatically endows the homogeneous space with an invariant HKT-structure. As a corollary of the proof of this first statement, we also get the fact that the semisimplicity of $G$ forces the group to be of a special kind, namely with every simple factor of type $A_n$. These results are summarized in the following
\begin{theorem}\label{MainThm}
Let $G$ be a compact connected Lie
  group acting transitively and almost effectively on some manifold $M$ preserving a
  hypercomplex structure $\mathcal Q$. Suppose that there exists a
  naturally reductive $G$-invariant metric
$g$ on $M$ which is hyper-Hermitian w.r.t. $\mathcal Q$.
Then \par
\begin{enumerate}
\item there exists a Cartan subalgebra of the complex reductive algebra $\g^\C$ and a corresponding root system for the semisimple part of $\g^\C$ such that the hypercomplex structure $\mathcal Q$ coincides with the one given by the Joyce's construction;
\item if $G$ is semisimple, then every simple factor of $\g$ is of type $A_n$.

\end{enumerate}
\end{theorem}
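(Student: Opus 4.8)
The plan is to reduce everything to Lie-algebraic data at the base point and then read off the root structure. Fix $o\in M$ with isotropy $L$, and choose an $\Ad(G)$-invariant inner product $\langle\,,\rangle$ on $\g$ inducing $g$; write $\g=\l\oplus\m$ with $\m=\l^\perp$, so that $T_oM\cong\m$ and $I,J,K$ become $\Ad(L)$-invariant endomorphisms of $\m$ with $IJ=K$ and $I^2=J^2=-\id$. Natural reductivity says precisely that $\langle[X,Y]_\m,Z\rangle$ is totally skew on $\m$, i.e. the torsion $T(X,Y)=-[X,Y]_\m$ of the canonical connection $\nabla^c$ is a $3$-form. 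Since $\nabla^c$ parallelizes every $G$-invariant tensor field, it parallelizes $g,I,J,K$; together with the skew torsion this shows $\nabla^c$ is the HKT connection and that $(M,g,\mathcal Q)$ is an invariant HKT manifold. The first task is then to encode integrability: for each of $I,J,K$ the $+i$-eigenspace $\m^{1,0}$ spans, together with $\l^\C$, a complex subalgebra $\mathfrak q_I,\mathfrak q_J,\mathfrak q_K$ of $\g^\C$, with $\mathfrak q_I\cap\overline{\mathfrak q_I}=\l^\C$ and $\mathfrak q_I+\overline{\mathfrak q_I}=\g^\C$.

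Next I would extract a Cartan subalgebra and a root system adapted to the triple. Writing $\g=\z\oplus\g_{ss}$ with $\g_{ss}$ semisimple, the subalgebra $\mathfrak q_I$ is of the type associated to an invariant complex structure on a compact homogeneous space; using compactness together with the bi-invariance coming from natural reductivity, one locates a Cartan subalgebra $\h$ of $\g^\C$ and a positive system $\Delta^+$ for $\g_{ss}^\C$ such that $\m^{1,0}_I$ consists of the positive root vectors together with half of the complexified toral part lying in $\m$; thus $I$ is of Samelson type ($+i$ on $\bigoplus_{\alpha>0}\g_\alpha$, $-i$ on the conjugates). Because $J$ anticommutes with $I$, it interchanges the $\pm i$-eigenspaces of $I$, i.e. it maps positive root spaces into the span of negative root spaces; imposing that $\mathfrak q_J$ be a subalgebra forces $J$ to intertwine the root spaces in exactly the pattern prescribed by Joyce's construction, and $K=IJ$ is then determined. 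This yields statement (1): $\mathcal Q$ is the hypercomplex structure produced by Joyce's construction for the Cartan subalgebra $\h$ and the root system $\Delta^+$.

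For statement (2) assume $G$ semisimple, so $\langle\,,\rangle$ is a factorwise multiple of minus the Killing form and the root spaces are orthogonal with $\langle\g_\alpha,\g_{-\alpha}\rangle\neq0$. The HKT torsion now equals the structure $3$-form $(X,Y,Z)\mapsto\langle[X,Y],Z\rangle$, and the HKT condition requires it to be of type $(2,1)+(1,2)$ simultaneously for $I$, $J$ and $K$; equivalently its $(3,0)$-component vanishes for each of the three complex structures. Expanding this on root vectors, and using that the integrable $J$ pairs each $\g_\alpha$ with a definite root space while respecting brackets, produces arithmetic relations among the positive roots of each simple factor. The plan is to show that a $J$-pairing compatible with bracket-closure and with totally skew (bi-invariant) structure constants can exist only when the positive roots organize into chains of the $\mathfrak{su}(n+1)$ pattern: all roots of equal length and the additive behaviour $e_i-e_j$. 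This excludes the multiply-laced types $B$, $C$, $F$, $G$ immediately, while the simply-laced types $D$ and $E$ are ruled out by a more detailed analysis at the branch node of their Dynkin diagrams. Hence each simple factor of $\g$ is of type $A_n$.

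The main obstacle is the passage carried out in the last two paragraphs: turning the abstract subalgebra conditions for $I$, $J$, $K$ into the precise root-space pattern of Joyce, and then performing the type-by-type elimination. I expect the cleanest route to be to isolate a single structure-constant identity forced by the simultaneous $(2,1)$-type of the torsion, and to read both the Samelson form of $I$ and the $A_n$ constraint off the resulting root arithmetic; verifying that no $(3,0)$-obstruction survives for type $A_n$, so that the construction indeed closes up, is the delicate point.
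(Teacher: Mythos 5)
Your first paragraph and the general framing (canonical connection, HKT, the subalgebras $\mathfrak q_I$, the Samelson--type description of $I$ via a Cartan subalgebra and a positive system) match the paper's setup. But the proposal stops exactly where the proof begins. The sentence ``imposing that $\mathfrak q_J$ be a subalgebra forces $J$ to intertwine the root spaces in exactly the pattern prescribed by Joyce's construction'' \emph{is} statement (1); it cannot be asserted. In the paper this is extracted not from the subalgebra condition alone but from the interplay of three ingredients: the mixed Nijenhuis tensor $N_{IJ}$ for the anticommuting pair, natural reductivity (which converts $N_{IJ}=0$ on $\m^{1,0}$ into the cyclic identity $\mathfrak S_{(X,Y,Z)}\,g(JX,[Y,Z]_{\m^\C})=0$), and the choice of a root $\theta$ maximal in $R_\n^+$. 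One then has to prove, in order: that $JE_\theta$ lies in $\t^\C$ and equals $k_\theta(H_\theta+iIH_\theta)$; that $JE_\alpha$ has no toral component for $\alpha\in R(\theta)$; that $g(JE_\alpha,E_\beta)=0$ unless $\alpha+\beta=\theta$, whence $JE_\alpha=2k_\theta N_{\alpha,-\theta}E_{\alpha-\theta}=\ad(\sigma_\theta)E_\alpha$; and then iterate on the orthogonal complement $C(\theta)$. Your sketch never addresses how $J$ acts on the toral part $\t$ of $\m$ at all, yet this is where the quaternionic blocks $\u_j=\s_j\oplus\R\,Z_j$ with $Z_j=I(iH_{\theta_j})\in\t$ come from, and it is the step that ties the construction to the specific central directions available in $\g$. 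Without these lemmas there is no argument, only a restatement of the goal.

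For part (2) your proposed mechanism does not match the actual obstruction and I do not believe it can be made to work as described. The exclusion of non-$A_n$ factors is not a root-length or structure-constant phenomenon: every compact simple $\g$, after adding a suitable torus, carries a Joyce hypercomplex structure, so no bracket-arithmetic identity fails for types $B,C,D,E,F,G$ per se. What fails when $G$ is semisimple is a counting condition: each $\s_j\cong\sp(1)$ must be completed to a copy of $\H$ by a direction $Z_j$ taken from the center $\z_\ell$ of the common centralizer of $\s(\theta_1),\dots,\s(\theta_\ell)$ inside $\g$ itself, forcing $\dim\z_\ell\ge\ell$; a case-by-case check over the strongly orthogonal sequences $\theta_1=\tilde\theta,\theta_2,\dots$ shows this holds only in type $A_n$ (where $\dim\z_\ell=\ell$). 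For instance in $\sp(n)$ the relevant roots $2e_1,\dots,2e_n$ are mutually strongly orthogonal and of equal length, and nothing in your ``totally skew structure constants'' criterion distinguishes them; the failure is simply that their common centralizer has center of dimension $<n$. Your plan also omits part (iv) of the inductive proposition (that $\theta_1$ may be taken to be the highest root of all of $R$, which uses almost-effectiveness), without which the reduction to the Wolf decomposition and the case-by-case count cannot start.
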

The existence of a naturally reductive metric which is hyper-Hermitian
is supposed and extensively used in \cite{SSTV} as well as in our
arguments, while we are unaware of any result proving it. We refer the
reader also to \cite[Theorem 4]{OP}, where the existence of a family of
invariant HKT structures on compact homogeneous spaces is discussed.
Our last result reduces the existence of a hyper-Hermitian naturally reductive metric on a homogeneous space to the case of a
Lie group.
\begin{proposition}\label{restr} Let $G$ be a compact Lie group and $M = G/L$ a $G$-homogeneous space endowed with an invariant hypercomplex structure $\mathcal Q$. Suppose $L$ is not trivial and connected. Then
\begin{enumerate}
\item The connected component $Y$ of the fixed point set $M^L$ of $L$ containing the
  origin $[eL]$ is a positive dimensional hypercomplex submanifold. In particular $\chi(M) = 0$;
\item If $g$ is an invariant naturally reductive metric, it is hyper-Hermitian if and only if its restriction to $Y$ is.\end{enumerate}
\end{proposition}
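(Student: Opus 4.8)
The plan is to fix a bi-invariant metric on $G$, with induced reductive decomposition $\g=\l\oplus\m$, so that $T_oM\cong\m$ (with $o=[eL]$) and the triple $I,J,K$ becomes a set of endomorphisms of $\m$ commuting with the isotropy representation $\Ad(L)|_\m$ and satisfying $I^2=J^2=K^2=-1$, $IJ=-JI=K$. Since $L$ is connected, $M^L=\{gL:\ g^{-1}Lg=L\}=\N_G(L)/L$, whose component through $o$ is $Y=\N_G(L)^0/L$, a reductive homogeneous space with $T_oY\cong\m^L:=\{X\in\m:\ [\l,X]=0\}=\m\cap\N_\g(\l)$, the fixed subspace of the isotropy representation. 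As $I,J,K$ commute with $\Ad(L)$ they preserve $\m^L$, endowing $T_oY$ with a quaternionic triple; spreading this around by $\N_G(L)^0$-invariance, and using that an $I$-invariant (resp.\ $J$-, $K$-invariant) submanifold of the integrable $(M,I)$ carries the restricted, automatically integrable, structure, I conclude that $Y$ is a hypercomplex submanifold.

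For $\chi(M)=0$ I would deduce $\rk L<\rk G$ and invoke Hopf--Samelson. Suppose $\rk L=\rk G$ and pick a common maximal torus $T\subseteq L$; then $\m^\C=\bigoplus_\alpha\g_\alpha$ runs over the roots $\alpha$ with $\g_\alpha\subseteq\m^\C$, and the corresponding root spaces are one-dimensional. Since $I$ and $J$ commute with $\Ad(T)$ they act on each $\g_\alpha$ by nonzero scalars $i_\alpha,j_\alpha$, and $IJ=-JI$ forces $i_\alpha j_\alpha=-i_\alpha j_\alpha$, whence $i_\alpha j_\alpha=0$, which is impossible (as $\m\neq0$). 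Thus $\rk L<\rk G$ and $\chi(M)=0$. This is consistent with the ``in particular'': positive-dimensionality of $Y$ gives $\m^L\neq0$, and since equal rank would force even $\m^T=0$, it again excludes $\rk L=\rk G$.

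The heart of the matter — and the step I expect to be hardest — is positive-dimensionality, i.e.\ $\m^L\neq0$, equivalently $Z_\g(\l)\supsetneq\z(\l)$. When $Z(G)^0\not\subseteq L$ the central orbit $Z(G)^0\cdot o\subseteq Y$ is already positive-dimensional, so the substantial case is $G$ semisimple. Here the pointwise weight computation of the previous paragraph no longer suffices, since for $T\subseteq L$ not maximal in $G$ the nonzero isotropy weight spaces may have dimension $>1$ and $I,J$ need not be scalars there; I would therefore bring in the integrability of $\mathcal Q$. Each of $I,J,K$ makes $\mathfrak{q}_I=\l^\C\oplus\m^{1,0}_I$, $\mathfrak{q}_J$, $\mathfrak{q}_K$ a complex subalgebra of $\g^\C$, while $J$ interchanges the $\pm i$-eigenspaces of $I$ and similarly for the other pairs. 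Assuming $\m^L=0$ and playing the three subalgebra conditions against these quaternionic relations should produce an incompatibility; this is the genuinely computational core, and the one place where integrability, rather than merely the pointwise quaternionic structure, is essential.

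For part (2), the implication $(\Rightarrow)$ is immediate, hyper-Hermiticity being a pointwise condition inherited by the hypercomplex submanifold $Y$ with its induced metric. For $(\Leftarrow)$ I would split $\m=\m^L\oplus\m_+$ into the trivial and the non-trivial isotypic parts, an orthogonal $\Ad(L)$-invariant decomposition preserved by $I,J,K$; hyper-Hermiticity on $M$ then amounts to $I,J,K\in\so(\m,\langle\cdot,\cdot\rangle)$ separately on $\m^L$ (which is exactly the hypothesis, being hyper-Hermiticity on $Y$) and on $\m_+$. To handle $\m_+=\sum_{\xi\in\l}[\xi,\m]$ I would combine $\Ad(L)$-invariance with $I[\xi,\cdot]=[\xi,I\cdot]$ to obtain, for $S_I(X,Y):=\langle IX,IY\rangle-\langle X,Y\rangle$, the identity $S_I([\xi,X],[\xi,Y])=-S_I(X,\ad_\xi^2 Y)$, and then feed in natural reductivity (the total skewness of $(X,Y,Z)\mapsto\langle[X,Y]_\m,Z\rangle$) together with integrability to relate the values of $S_I$ on $\m_+$ back to those on $\m^L$, where they vanish. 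Closing this propagation from $Y$ to all of $M$ is the main point of $(\Leftarrow)$.
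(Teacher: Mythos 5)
Your setup is sound: identifying $Y$ with the component of $N_G(L)/L$ through the origin, $T_{[eL]}Y$ with $\m^L$, observing that $I,J,K$ preserve $\m^L$ because they commute with the isotropy representation, and noting that an almost-complex submanifold of a complex manifold is automatically a complex submanifold. Your rank argument for $\chi(M)=0$ (one-dimensional weight spaces force $I,J$ to act by scalars, which cannot anticommute) is correct and is in fact a clean, self-contained way to get that clause. The problem is that the two statements you yourself single out as the heart of the proposition are precisely the ones you do not prove: for positive-dimensionality you write that assuming $\m^L=0$ and ``playing the three subalgebra conditions against the quaternionic relations should produce an incompatibility,'' and for (2)($\Leftarrow$) you propose an identity for $S_I$ and a ``propagation'' whose closure you explicitly defer. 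Neither is an argument; both are conjectured computations, and it is not evident that either, pursued as stated, terminates. Your own example-level worry is justified: $\rk L<\rk G$ does not give $\m^L\neq 0$ (a principal $\so_3\subset\su_3$ has trivial centralizer), so something genuinely global about integrability must enter, and you never extract it from $\mathfrak q_I=\l^\C\oplus\m^{1,0}_I$.

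For comparison, the paper closes both gaps with short structural arguments. For (1): the fibre of the Tits fibration of $(M,I)$ through the origin is pointwise fixed by $L$ (because $[\l,\t]=0$), hence contained in $Y$; so if $Y$ were a point the fibre would be trivial and $M$ would be a flag manifold $G/C$ with $C$ the centralizer of a torus. There the irreducible isotropy submodules are mutually inequivalent (de Siebenthal), hence each is $\mathcal Q$-invariant, and on a submodule where the centre $\c$ of $\l$ acts nontrivially the commutant of the isotropy representation is the abelian algebra generated by $\ad(\c)$ and the identity, which cannot contain two anticommuting complex structures. This is where integrability is used, in the packaged form of the Tits fibration. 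For (2)($\Leftarrow$): with the decomposition $\g=\l+\t+\n$ and root spaces for $(\t_\l+\t)^\C$, natural reductivity and $\ad(\l)$-invariance force $g(E_\alpha,E_\beta)=0$ unless $\alpha+\beta=0$; since every invariant complex structure acts on each $\g_\alpha$ by $\pm i$, the metric is \emph{automatically} Hermitian on $\n$, the cross terms $g(\t,\n)$ vanish, and the only nontrivial condition lives on $\t\subseteq T_{[eL]}Y$, which is exactly the hypothesis. So the mechanism is not a propagation outward from $\m^L$ but the observation that on the root-space part the Hermitian condition is free. As written, your proposal establishes the easy halves of both claims and leaves the substantive ones open.
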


{\sc Notation}. Lie groups and their Lie algebras will be indicated by
capital and gothic letters respectively. We will denote the Cartan-Killing form by $\mathcal{B}$.

\section{Preliminaries}
\subsection{Invariant complex structures}
\label{ics}
In order to establish the notation we briefly recall the structure theory of
compact homogeneous complex manifolds; the reader is referred e.g. to \cite{Ak} for a more
detailed exposition.\par
Let $M$ be a compact complex manifold and let $G$ be a
compact connected Lie group acting almost effectively, transitively and holomorphically on $M$. We
will write $M = G/L$ for some compact subgroup $L$. Up to a finite
covering we can assume that $L$ is connected. We will also denote by $I$ the $G$-invariant complex structure
on $M$. \par
The complexified group $G^\C$ acts holomorphically on $G/L$, so that $M=G^\C/Q$ for
some connected complex subgroup $Q \subset G^\C$.
It is well known that the {\em Tits fibration} $\phi$ provides a
holomorphic fibering of the homogeneous space $M$ over a compact rational homogeneous space $G^\C/P$, where the
parabolic subgroup $P$ is in general defined to be the normalizer
$N_{G^\C}(Q^o)$ of $Q^o$ (see \cite{Ak}); since $Q$ is connected the fibres of $\phi$ are
 complex tori. The flag manifold $G^\C/P$ can be written as
$G/C$ endowed with a $G$-invariant complex structure $\mathcal{I}$,
where $C$ is the centralizer of some torus in $G$.
Accordingly the Lie algebra $\g$ can be decomposed as
\begin{equation}\label{dec}
\g=\l\oplus\t\oplus\n\,,\end{equation}
where $\c=\l\oplus\t$ and $\n$ is an $\Ad(C)$-invariant complement of
$\c$ in $\g$. Since $\t$ identifies with the tangent space to the fibre we have
$[\t,\t]=0$.
Moreover the algebra $\c$ is contained in the normalizer of $\l$ in $\g$
by construction, hence $[\l,\t]\subset\l \cap \t = 0$.
We can choose a Cartan subalgebra $\h$ of the form $\h=\t_\l^\C\oplus
\t^\C$, where $\t_\l$ is a maximal abelian subalgebra of $\l$.
Denote by $R$ the corresponding root system of $\g^\C$, by $R_\l$ the subsystem relative to $\l$ so that
$\l^\C = \t_\l \oplus \bigoplus_{\alpha\in R_\l}\g_\alpha$, and by $R_\n$ the
symmetric subset of $R$ such that $\n^\C=\bigoplus_{\alpha \in R_\n}\g_\alpha$. The $G$-invariant complex structure
$\mathcal I$ induces an endomorphism of $\n^\C$ that is $\Ad(C)$-invariant and therefore the corresponding subspace
$\n^{1,0}$ is a sum of root spaces. The integrability of $\mathcal I$ is equivalent to the condition
$$[\n^{1,0},\n^{1,0}]_{\n^\C} \subseteq \n^{1,0}$$
and one can prove (see e.g. \cite{BFR}) that there is a suitable ordering of $R_\n=R_\n^+ \cup R_\n^-$ such that
$$\n^{1,0} = \bigoplus_{\alpha\in R_\n^+} \g_\alpha, \quad
\n^{0,1} = \bigoplus_{\alpha\in R_\n^-} \g_\alpha.$$
The $G$-invariant complex structure $I$ on $G/L$ induces an $\Ad(L)$-invariant endomorphism, still denoted by $I$,
of $\m^\C$, where $\m := \t + \n$. It leaves both $\t$ and $\n$ invariant with $I|_\n = \mathcal I$ and
the integrability of $I$ is equivalent to the vanishing of the Nijenhuis tensor $N_I$, namely  for $X,Y\in \m$
\begin{equation}\label{NI}
[IX,IY]_\m - [X,Y]_\m - I[IX,Y]_\m - I[X,IY]_\m = 0.
\end{equation}
Equation \eqref{NI} is trivial for $X,Y\in \t$ and with $X\in \t$ and
$Y\in \n$ it reduces to the $\ad(\t)$-invariance of $I$. When $X,Y\in \n$, then \eqref{NI} is the integrability of $\mathcal
I$ because $[\n^{1,0},\n^{1,0}]\subseteq \n^{1,0}$. \par
Viceversa, we start with a decomposition as in
\eqref{dec}, where $\l+\t = \c$ and $\c$ is the centralizer of an abelian
subalgebra.
If we fix an $\ad(\c)$-invariant integrable complex
structure $\mathcal I$ on $\n$ and we extend it by choosing an arbitrary complex structure $I_\t$ on $\t$,
then $I_\t + \mathcal I$ will provide an integrable $L$-invariant complex structure on the homogeneous space $G/L$.

\subsection{Hypercomplex and HKT structures}
A {\em hypercomplex} structure on a manifold $M$ is determined by a pair of
anticommuting complex structures. Whenever such a pair $(I,J)$ is
given, one has a 2-sphere of complex structures on $M$ given by
$\{aI + bJ + c IJ \colon a,b,c \in \R \quad {\rm and} \quad
a^2+b^2+c^2=1\}$. \\
A metric $g$ on a hypercomplex manifold $M$ is called
{\em hyper-Hermitian} if it is Hermitian with respect to both the
complex structures.
A metric connection $\nabla$ on a hyper-Hermitian manifold $(M,g,I,J)$ is called
{\em hyper-K\"ahler with torsion} (HKT) if $\nabla I =\nabla J
=0$ and the torsion $T^\nabla$ is totally skew-symmetric, i.e. the
tensor $\tau(X,Y,Z)=g(T^\nabla(X,Y),Z)$ is a 3-form.\\
Note that if such a connection exists, then it is unique since it is
the {\em Bismut connection} of each complex structure (see
\cite{Gau}).  \\
An important technical tool we use in section \ref{Main} is the
following well-known fact (see \cite{YM}).
\begin{proposition}\label{PNmisto}
Let $(M,I)$ be a complex manifold. An almost complex structure $J$ on
$M$ anticommuting with $I$ is integrable if and only if the tensor
$N_{IJ}$ defined for $X,Y \in \Gamma(TM)$ as
\begin{equation}
\label{Nmisto}
N_{IJ}(X,Y) = [IX,JY]+[JX,IY]-I[JX,Y]-J[IX,Y]-I[X,JY]-J[X,IY]
\end{equation}
vanishes identically on $M$.
\end{proposition}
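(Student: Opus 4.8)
The plan is to work at the origin $o=[eL]$ with a reductive decomposition $\g=\l\oplus\m$, identifying $T_oM$ with $\m$ and the invariant tensors $I,J,K,g$ with $\Ad(L)$-invariant tensors on $\m$, the isotropy representation being $\Ad(L)|_\m$. For part (1), I first recall that the fixed point set of the smooth action of the compact group $L$ is a closed submanifold whose component $Y$ through the fixed point $o$ has tangent space the fixed subspace $T_oY=\m^L=\{X\in\m:\Ad(l)X=X\ \forall l\in L\}=\m\cap\z_\g(\l)$, the last equality because $L$ is connected. Since $I,J,K$ are $G$-invariant they commute with the isotropy representation at every point of $Y$ (each such point being $L$-fixed), hence preserve the corresponding fixed subspaces, so $Y$ is an almost hypercomplex submanifold. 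As $TY$ is invariant under the integrable structures $I,J,K$, the Nijenhuis tensors of the restricted structures are the restrictions of the ambient ones and therefore vanish (equivalently, one applies Proposition \ref{PNmisto} to the pair $(I|_Y,J|_Y)$), so $Y$ is a hypercomplex submanifold.

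To see that $\dim Y>0$ and $\chi(M)=0$, fix the complex structure $I$ and use the structure theory recalled in Section~\ref{ics}: the Tits fibration exhibits $M$ over the flag manifold $G/C_I$ with fibre the complex torus tangent to $\t_I$, where $\c_I=\l\oplus\t_I$ and $[\l,\t_I]=0$, so that $\t_I\subseteq\m^L$. If $\t_I=0$ then $(M,I)$ is the flag manifold $G^\C/P$, which is Fano and hence has $c_1(M,I)\neq 0$ in $H^2(M;\R)$. On the other hand a hypercomplex structure forces $c_1(M,I)=0$: the complex structure $J$ anticommutes with $I$ and squares to $-\id$, so it is a $\C$-antilinear automorphism of the complex bundle $(TM,I)$, i.e. a quaternionic structure, whence $(TM,I)\cong\overline{(TM,I)}$ and $2c_1(M,I)=0$. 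This contradiction shows $\t_I\neq 0$. Consequently $\dim Y\geq\dim\t_I>0$, and since the fibre $F$ of the Tits fibration is a positive dimensional torus we get $\chi(M)=\chi(G/C_I)\,\chi(F)=0$.

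For part (2) one implication is immediate: if $g$ is hyper-Hermitian on $M$ then, $T_oY=\m^L$ being invariant under $I,J,K$, the Hermitian identities restrict to $Y$. For the converse suppose $g|_Y$ is hyper-Hermitian and write $\m=\m^L\oplus\mathfrak{q}$, where $\mathfrak{q}=(\m^L)^{\perp}$ is the sum of the nontrivial $\Ad(L)$-isotypic components. As $g,I,J,K$ are $\Ad(L)$-invariant, $I,J,K$ preserve both summands, and by Schur's lemma the symmetric Hermitian-defect tensors $(X,Y)\mapsto g(IX,IY)-g(X,Y)$, together with their analogues for $J$ and $K$, have no components pairing $\m^L$ with $\mathfrak{q}$. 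Since their restriction to $\m^L\times\m^L$ is exactly the obstruction to $g|_Y$ being hyper-Hermitian, which vanishes by hypothesis, it remains to prove that these defects vanish on $\mathfrak{q}\times\mathfrak{q}$.

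This last point is where natural reductivity is indispensable, and it is the main obstacle of the argument: on $\mathfrak{q}$ the isotropy acts without fixed vectors, so no information descends from $Y$, and one must show that every naturally reductive invariant metric is automatically compatible with $I,J,K$ there. The tools I would use are the skew-symmetry of $\ad(X)|_\m$ for $X\in\m$ (the defining property of natural reductivity), the skew-symmetry and $I,J,K$-equivariance of $\ad(\xi)|_\m$ for $\xi\in\l$, and the integrability conditions \eqref{NI} and \eqref{Nmisto}; these should combine to pin down the $\Ad(L)$-invariant metric on each nontrivial isotypic block inside the commutant of the isotropy action in a way that is forced to be $I,J,K$-orthogonal. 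Finally I observe that, because $[\l,\m^L]=0$, the subalgebra $\l\oplus\m^L$ has $\l$ as an ideal and $Y$ is the compact Lie group $N_G(L)^\circ/L$ equipped with a left-invariant hypercomplex structure; this is precisely the reduction of the hyper-Hermitian problem to the Lie group case treated in \cite{SSTV}.
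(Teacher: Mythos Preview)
Your proposal does not address Proposition~\ref{PNmisto} at all. That proposition is a purely differential-geometric criterion on an arbitrary complex manifold $(M,I)$: an anticommuting almost complex structure $J$ is integrable if and only if the mixed tensor $N_{IJ}$ of \eqref{Nmisto} vanishes. Its proof involves no homogeneous spaces, no reductive decompositions, no metrics; one simply expands the Nijenhuis tensor $N_J$ and uses $N_I=0$ together with $IJ=-JI$ to rewrite it in terms of $N_{IJ}$. In the paper this result is not even proved but quoted as well known from \cite{YM}; indeed you yourself \emph{invoke} it midway through your argument rather than proving it.

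What you have actually written is an attempt at Proposition~\ref{restr}. For the record: your argument for part~(1), ruling out the flag-manifold case via $c_1(M,I)=0$ (from the quaternionic structure on $TM$) versus the Fano property of $G^\C/P$, is correct and genuinely different from the paper's route, which instead uses that on a flag manifold the $\ad(\l)$-irreducible submodules of $\m$ are pairwise inequivalent, hence each is $\mathcal Q$-stable, and on such a submodule $\mathcal Q$ would have to sit inside the commutative algebra $\ad(\c)$, contradicting $IJ=-JI$. Your part~(2), however, is not a proof: you explicitly flag the step on $\mathfrak q=(\m^L)^\perp$ as the ``main obstacle'' and only list tools. The paper's argument bypasses the splitting $\m=\m^L\oplus\mathfrak q$ altogether: for each fixed $I\in\mathcal Q$ it takes the Tits decomposition $\m=\t\oplus\n$, uses natural reductivity and $\ad(\l)$-invariance to get $g(E_\alpha,E_\beta)=0$ for $\alpha+\beta\neq0$ on $\n^\C$, notes $g(IE_\alpha,IE_{-\alpha})=g(E_\alpha,E_{-\alpha})$ since $IE_\alpha=\pm iE_\alpha$, and obtains $I$-Hermitianity on $\t$ from the hypothesis on $Y$ because $[\l,\t]=0$ forces $\t\subseteq\m^L$.
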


Using the notation of subsection \ref{ics} we consider a homogeneous space
$M=G/L$ where the compact group $G$ preserves a hypercomplex structure
generated by $I, J$ and $K=IJ$. \\
Given a $G$-invariant decomposition $\g = \l + \m$ and the
corresponding {\em canonical connection} $D$, it is known that any $G$-invariant tensor on
the homogeneous space $G/L$ is $D$-parallel (see e.g. \cite{KoNo}). Since the torsion
of $D$ is given by $T^D(X,Y)= - [X,Y]_\m$, we see that $D$ becomes the HKT-connection if there exists a
$G$-invariant naturally reductive metric on $M$ that is Hermitian
with respect to $I$ and $J$.
\subsection{The Joyce construction}\label{Jcon}
In \cite{Joyce} Joyce explains how to construct invariant hypercomplex
structures on suitable compact homogeneous spaces. His construction
can be outlined as follows. Given a compact Lie algebra $\g$ we fix a
Cartan subalgebra $\h$ of $\g^\C$ and denote by $R$ the set of
corresponding roots. We can find a sequence $\theta_1,\ldots,\theta_k$
of roots such that if $\mathfrak{s}_i^\C \cong \sl_2(\C)$ is the
subalgebra generated by the root spaces $\g_{\theta_i},
\g_{-\theta_i}$ and
\[
\mathfrak{f}_i:=\g \cap\bigoplus_{{\mathcal B}(\alpha,\theta_i)\neq0}\!\g_{\alpha}\;\,\,, \quad
\mathfrak{b}_i:= \g \cap \bigcap_{j=1}^i \,\z_{\g^\C}(\mathfrak{s}_j^\C)\;\,,
\]
where $\z_{\g^\C}(\s_j^\C)$ denotes the centralizer of $\s_j^\C$ in $\g^\C$,
then one has the decomposition
\begin{equation}
\label{Jd}
\g=\mathfrak{b}_k \oplus \bigoplus_{i=1}^k \s_i \oplus \bigoplus_{i=1}^k \f_i\,.
\end{equation}

The Lie algebra of the isotropy $\l\subset\mathfrak{b}_k$ is chosen as
follows: the semisimple part of $\l$ coincides with the semisimple
part of $\mathfrak{b}_k$ and the center $\z_\l$ of $\l$ is a subset of
the center $\z'$ of $\mathfrak{b}_k$ such that $\dim \z' -\dim \z_\l
-k \equiv_4 0$. We denote by $\m$ the $\mathcal B$-orthogonal complement of
$\l$ in $\g$.\par
The invariant hypercomplex structure on $G/L$ is obtained by the
following $\Ad(L)$-invariant hypercomplex structure $\mathcal Q$ on
$\m$.
The structure $\mathcal Q_{|\f_i}$ coincides with $\ad(\s_i)$.
We select $\mathcal B$-orthogonal vectors $u_1,\ldots,u_k$ in $\z'\cap \m$
and use the fact that $\mathfrak{q}_i=\s_i\oplus \R u_i\cong \H$ to
define $\mathcal Q_{|\mathfrak{q}_i}$. The complement of $\z_\l\oplus
\sum_i \R u_i$ in $\z'$ can be endowed with an arbitrary linear
hypercomplex structure.

\section{Proof of the main results}\label{Main}
\subsection{Proof of Theorem \ref{MainThm}, part (1)} We write $M = G/L$ for some closed subgroup $L\subset G$. We will
also suppose that $L$ is connected, otherwise we pass to a finite covering.
We will suppose that $G/L$ admits a naturally
reductive metric $g$ with respect to the reductive decomposition $\g =
\l + \m$, which is Hermitian w.r.t. every complex structure in
$\mathcal Q$.
We recall (see e.g. \cite{KoNo}) that the metric $g$ induces a scalar product on $\m$ such that, for every $X,Y,Z\in \m$
\begin{equation}
\label{red} g([X,Y]_\m,Z) + g(Y,[X,Z]_\m) = 0.
\end{equation}
Using \eqref{red} and the $\Ad(L)$-invariance of $g$, it is immediate
to see that $g(\t,\n)=0$.
\par
We fix one complex structure $I\in \mathcal Q$ and apply the structure theory explained in section 2.1, keeping the same notation. If now $J\in \mathcal Q$ is an integrable complex structure anticommuting with $I$, we think of $J$ as an $\Ad(L)$-invariant endomorphism of $\m$ and we may formulate
Proposition \ref{PNmisto} as follows: for every $X,Y\in \m$
\begin{equation}\label{misto}
[IX,JY]_\m+[JX,IY]_\m-I[JX,Y]_\m-J[IX,Y]_\m-I[X,JY]_\m-J[X,IY]_\m = 0.
\end{equation}
If we now extend $J$ to the complexification $\m^\C$, we see that $J(\m^{1,0}) = \m^{0,1}$ and
$J(\m^{0,1}) = \m^{1,0}$. If $X\in \m^{1,0}$ and $Y\in \m^{0,1}$, equation (\ref{misto}) reduces to
$$i[X,JY]_{\m^\C} -i[JX,Y]_{\m^\C} - I[JX,Y]_{\m^\C} - I[X,JY]_{\m^\C} = 0, $$
which is automatically satisfied since
$[\m^{1,0},\m^{1,0}]_{\m^\C}\subseteq \m^{1,0}$.\\
If $X,Y\in \m^{1,0}$ we see that $N_{IJ}(X,Y)\in \m^{0,1}$ and therefore equation (\ref{misto}) is equivalent to
$g(N_{IJ}(X,Y),Z) = 0$ for every  $Z\in \m^{1,0}$. Using the fact that $g$ is naturally reductive and Hermitian
w.r.t. $I$ and $J$, we have that equation (\ref{misto}) is equivalent to the following condition: for every
$X,Y,Z\in \m^{1,0}$ the cyclic sum
\begin{equation}
\label{cyclic}
\mathfrak{S}_{{\small (X,Y,Z)}} \,\, g(JX,[Y,Z]_{\m^\C}) = 0.
\end{equation}
We now consider the root system $R$ associated with the choice of the maximal abelian subalgebra $(\t_\l+\t)^\C$ of
$\g^\C$ as described in section 2.1. The root subsystem $R_\n$ where $\m = \t \oplus \n$ has an ordering $R_\n =
R_\n^+ \cup R_\n^-$ induced by the complex structure $I$ and we can
select a root $\theta\in R_\n^+$ which is maximal w.r.t. this
ordering, namely for every $\alpha\in R_\n^+$ $$\theta + \alpha
\not\in R_\n^+.$$
Throughout the following we will denote by $\{H_\alpha,E_\alpha\}_{\alpha \in
  R}$ the standard Chevalley's basis of the semisimple part of
$\g^\C$.\\
We here remark that, since the metric $g$
is naturally reductive,  for $\alpha,\beta\in R_\n^+$ we have
$g(E_\alpha,E_\beta)=0$ whenever $\alpha\neq -\beta$ and
$g(E_\alpha,E_{-\alpha}) \neq 0$. Moreover if $iH_\alpha \in \t$ one
can see that
$g(E_\alpha,E_{-\alpha})= -\frac{\|\alpha\|^2}{|\alpha|^2}$, where $\|\alpha\|^2=g(iH_\alpha,iH_\alpha)$ .

\begin{lemma} \label{Et}We have $JE_{\theta}\in \t^\C$. In particular $E_\theta$ is centralized by $\l$. \label{lemma0}
\end{lemma}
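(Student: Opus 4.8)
The plan is to show that $JE_\theta$ has no component along $\n$, i.e. that $JE_\theta\in\t^\C$, and then to deduce the centralizing statement for free. Since $E_\theta\in\n^{1,0}\subseteq\m^{1,0}$ and $J$ anticommutes with $I$, we have $JE_\theta\in\m^{0,1}=\t^{0,1}\oplus\n^{0,1}$, where $\n^{0,1}=\bigoplus_{\mu\in R_\n^+}\C E_{-\mu}$. Because $g$ is naturally reductive we have $g(E_\alpha,E_\beta)=0$ unless $\alpha=-\beta$ and $g(\t,\n)=0$, so the $E_{-\mu}$-coefficient of $JE_\theta$ is detected by $g(JE_\theta,E_\mu)$. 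Thus the whole content of the first assertion is the vanishing $g(JE_\theta,E_\mu)=0$ for every $\mu\in R_\n^+$, and the tool to produce these identities is the cyclic relation \eqref{cyclic}, which holds for all $X,Y,Z\in\m^{1,0}$ (and, by conjugating, also for $X,Y,Z\in\m^{0,1}$).

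The main mechanism is the maximality of $\theta$. I would first record the structural facts that $R=R_\l\sqcup R_\n$ (since $\t^\C\subseteq\h$ contributes no root spaces), that $R_\n^+$ is closed under addition inside $R_\n$ (this is exactly $[\n^{1,0},\n^{1,0}]\subseteq\n^{1,0}$), and that therefore, for $\alpha\in R_\n^+$, if $\theta+\alpha$ were a root of $\n$ it would lie in $R_\n^+$, contradicting maximality. Hence $\theta+\alpha\notin R_\n$, so $[E_\theta,E_\alpha]_{\m^\C}=0$ (the bracket either vanishes or lands in $\l^\C$, which projects to $0$). Plugging $X=E_\theta$, $Y=E_\alpha$, $Z=E_\beta$ with $\alpha,\beta\in R_\n^+$ into \eqref{cyclic}, the two cross terms $g(JE_\alpha,[E_\beta,E_\theta]_{\m^\C})$ and $g(JE_\beta,[E_\theta,E_\alpha]_{\m^\C})$ then die, leaving $g(JE_\theta,[E_\alpha,E_\beta]_{\m^\C})=0$. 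This already kills the $E_{-\mu}$-component of $JE_\theta$ for every $\mu\in R_\n^+$ that is a sum $\mu=\alpha+\beta$ of two roots of $R_\n^+$ (the ``decomposable'' directions).

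To reach the remaining (indecomposable) directions I would feed a $\t$-vector into the same identity: take $X=E_\theta$, $Y=E_\alpha\in\n^{1,0}$, $Z=T\in\t^{1,0}$. The term in $[E_\theta,E_\alpha]_{\m^\C}$ again vanishes by maximality, while $[T,E_\theta]=\theta(T)E_\theta$ and $[E_\alpha,T]=-\alpha(T)E_\alpha$ survive; using the Hermitian symmetry $g(JE_\theta,E_\alpha)=-g(JE_\alpha,E_\theta)$ (from $g(JX,Y)=-g(X,JY)$ and symmetry of $g$), the relation collapses to $(\alpha+\theta)(T)\,g(JE_\theta,E_\alpha)=0$ for all $T\in\t^{1,0}$. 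Since roots are purely imaginary on $\t$, one checks that $(\alpha+\theta)$ vanishes on the half-dimensional space $\t^{1,0}$ if and only if it vanishes on all of $\t$; hence either $g(JE_\theta,E_\alpha)=0$, or $\alpha|_\t=-\theta|_\t$.

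The hard part is exactly to exclude the second alternative for a would-be surviving root $\mu\in R_\n^+$ (which, by the previous step, must also be indecomposable). Here I would use that the $I$-ordering $R_\n=R_\n^+\cup R_\n^-$ is the positivity pattern of the $\ad(\c)$-invariant integrable structure $\mathcal I$, determined by a regular element witnessing positivity on $\t$: for two elements $\mu,\theta\in R_\n^+$ the equality $\mu|_\t=-\theta|_\t$ is then incompatible with both being positive. Combined with the decomposable case, this yields $g(JE_\theta,E_\mu)=0$ for all $\mu\in R_\n^+$, so $JE_\theta\in\t^\C$. The ``in particular'' is then immediate: for $\xi\in\l$, the $\Ad(L)$-invariance of $J$ gives $J[\xi,E_\theta]=[\xi,JE_\theta]\in[\l^\C,\t^\C]=0$ because $[\l,\t]\subseteq\l\cap\t=0$, and since $J$ is invertible we get $[\xi,E_\theta]=0$, i.e. $E_\theta$ is centralized by $\l$.
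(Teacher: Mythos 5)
Your reduction to showing $g(JE_\theta,E_\alpha)=0$ for all $\alpha\in R_\n^+$, and your use of \eqref{cyclic} with $X=E_\theta$, $Y=E_\alpha$, $Z=T\in\t^\C\cap\m^{1,0}$ to obtain $(\alpha+\theta)(T)\,g(JE_\theta,E_\alpha)=0$, coincide with the paper's argument (the detour through ``decomposable'' roots $\mu=\alpha+\beta$ is correct but not needed). The gap is in the remaining case, where $\alpha+\theta$ vanishes on all of $\t$. You assert that two roots $\mu,\theta\in R_\n^+$ cannot satisfy $\mu|_\t=-\theta|_\t$ because the ordering of $R_\n$ is ``determined by a regular element witnessing positivity on $\t$''. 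This is not justified, and it is false in general: the ordering $R_\n=R_\n^+\cup R_\n^-$ is induced by the invariant complex structure $\mathcal I$ on the flag manifold $G/C$, and the element inducing it lives in $\z(\c)=\z(\l)\oplus\t$, not in $\t$ alone. When $\z(\l)\neq 0$ the component there can dominate, so positivity of a root of $R_\n$ is not detected by its restriction to $\t$, and nothing prevents $\alpha,\theta\in R_\n^+$ from satisfying $(\alpha+\theta)|_\t\equiv 0$ provided $\alpha+\theta$ does not vanish on $\t_\l$.

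The paper closes exactly this case by a mechanism your argument never invokes: since $\alpha+\theta$ is a nonzero functional on $\h=\t_\l^\C\oplus\t^\C$ that vanishes on $\t^\C$, one can choose $H'\in\t_\l\subseteq\l$ with $(\alpha+\theta)(H')\neq 0$; the $\ad(\l)$-equivariance of $J$ gives $[H',JE_\theta]=\theta(H')JE_\theta$, and pairing with $E_\alpha$ via the invariance of $g$ yields $(\alpha+\theta)(H')\,g(JE_\theta,E_\alpha)=0$ once more, hence the vanishing. So the missing idea is to exploit the isotropy directions $\t_\l$ (equivariance of $J$ under $\l$) rather than to rule the bad case out through the ordering. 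Your deduction of the ``in particular'' statement from $JE_\theta\in\t^\C$, via $J[\xi,E_\theta]=[\xi,JE_\theta]\in[\l^\C,\t^\C]=0$, is correct and agrees with the paper.
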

\begin{proof}
Since $g(\t,\n) = 0$, we need to show that $g(JE_{\theta},E_\alpha)=0$ whenever
$\alpha \in R_\n^+$.
To do this we can first take $X=E_{\theta}$, $Y=E_\alpha$ and $Z= H \in \t^\C \cap
\m^{(1,0)}$ in formula \eqref{cyclic} and obtain
\[
(\alpha+\theta)(H)\ g(JE_{\theta},E_\alpha)=0\,.
\]
Now, if $\alpha + \theta$ does not vanish on $\t^\C \cap\m^{(1,0)}$,
then the claim follows. Otherwise $\alpha + \theta$ vanishes on the whole $\t^\C$ since $\alpha
+ \theta \in i\t^*$; in this case we can take $H' \in \t_\l$ such that $(\alpha +
\theta)(H') \neq 0$. For such a $H'$ we have
$[H',JE_{\theta}]=\theta(H')JE_{\theta}$ and, contracting with
$E_\alpha$ and using the fact that $g$ is $J$-Hermitian, once again we get
\[
(\alpha+\theta)(H')\ g(JE_{\theta},E_\alpha)=0\,,
\]
obtaining our first claim. The second assertion follows from the fact that $[\l,\t]=0$ and the
$\ad(\l)$-invariance of $J$.
\end{proof}

Now we want to compute the $\t^\C$-component of $JE_\alpha$ for $\alpha\in R_\n^+$.
\begin{lemma}\label{t-component}
Given $\alpha\in R_\n^+$ the following statements hold:
\begin{itemize}
\item[(i)] If $\alpha|_{\t_\l} \equiv 0$ then $JE_\alpha =
k_\alpha(H_\alpha+iIH_\alpha)\, {\rm{mod}} \, \n^\C$ for some $k_\alpha
\in \C$. In particular $JE_\theta =
k_\theta(H_\theta+iIH_\theta)$, where $|k_\theta|^2 = \frac{1}{2|\theta|^2}$.
\item[(ii)] If $\alpha|_{\t_\l} \not\equiv 0$ then $JE_\alpha \in \n^\C.$
\end{itemize}
\end{lemma}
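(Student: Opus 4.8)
The plan is to read off the $\t^\C$-component of $JE_\alpha$ by pairing against $\t^\C\cap\m^{1,0}$. Since $J$ anticommutes with $I$ we have $JE_\alpha\in\m^{0,1}$, so its $\t^\C$-component lies in $\t^\C\cap\m^{0,1}$; and because $g(\t,\n)=0$ while $g$ restricts to a nondegenerate pairing between $\t^\C\cap\m^{1,0}$ and $\t^\C\cap\m^{0,1}$, this component is entirely encoded by the numbers $g(JE_\alpha,W)$ with $W\in\t^\C\cap\m^{1,0}$. The easy ingredient is the $\ad(\t_\l)$-invariance of $J$: as $\t_\l\subset\l$, for $H'\in\t_\l$ one has $[H',JE_\alpha]=J[H',E_\alpha]=\alpha(H')\,JE_\alpha$, so $JE_\alpha$ is an $\ad(\t_\l)$-eigenvector of weight $\alpha|_{\t_\l}$. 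Writing $JE_\alpha=Z_\alpha+(\text{root part})$ with $Z_\alpha\in\t^\C\cap\m^{0,1}$ and comparing $\t^\C$-components (on which $\ad(\t_\l)$ acts trivially) gives part (ii) at once: if $\alpha|_{\t_\l}\not\equiv0$ then $Z_\alpha=0$, i.e. $JE_\alpha\in\n^\C$. When $\alpha|_{\t_\l}\equiv0$ the same computation only confines $JE_\alpha$ to the zero $\t_\l$-weight space, so the root part involves only $E_\beta$ with $\beta|_{\t_\l}=0$, and it remains to pin down the direction of $Z_\alpha$.

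For part (i) I would reduce the claim to the single assertion that $g(JE_\alpha,W)$ depends on $W\in\t^\C\cap\m^{1,0}$ only through $\alpha(W)$. Using that $g$ is $I$-Hermitian one gets $g(H_\alpha+iIH_\alpha,W)=2\,g(H_\alpha,W)$, while a short computation from \eqref{red} (with $X=E_{-\alpha}$, $Y=E_\alpha$, $Z=W$) gives $g(H_\alpha,W)=g(E_\alpha,E_{-\alpha})\,\alpha(W)$. Hence $g(H_\alpha+iIH_\alpha,W)=2\,g(E_\alpha,E_{-\alpha})\,\alpha(W)$, and since the pairing of $\t^\C\cap\m^{0,1}$ with $\t^\C\cap\m^{1,0}$ is nondegenerate, a relation $g(JE_\alpha,W)\propto\alpha(W)$ forces $Z_\alpha\in\C\,(H_\alpha+iIH_\alpha)$, which is exactly statement (i).

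The hard part is precisely this proportionality, and here is where I expect the main obstacle. The two natural tools, the integrability identity \eqref{cyclic} and natural reductivity \eqref{red}, are blind to the $\t^\C$-direction of $J$: for $X,Y\in\m^{1,0}$ one has $[X,Y]_{\m^\C}\in\n^{1,0}$ (brackets with $\t^\C\cap\m^{1,0}$ preserve $\n$, and brackets of two vectors of $\t^\C\cap\m^{1,0}$ vanish), so every instance of \eqref{cyclic} and of \eqref{red} pairs $J(\cdot)$ against an element of $\n^{1,0}$ and therefore constrains only the $\n^{0,1}$-components of $J$, collapsing to tautologies on the $\t^\C$-direction. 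The way around this is to bring in $J^2=-\id$ together with the reality relation $\overline{JX}=J\overline{X}$, which transfer the information carried by the $\n$-components back onto $Z_\alpha$. I would organize this as a descending induction along the ordering of $R_\n^+$, with the maximal root $\theta$ as base case: by maximality $[E_\theta,\n^{1,0}]=0$, and Lemma \ref{Et} already gives $JE_\theta\in\t^\C$ with no root part, so $J^2E_\theta=-E_\theta$ reads simply $JZ_\theta=-E_\theta$ and fixes the direction of $Z_\theta$ with no coupling; the lower roots are then treated by expanding $J^2E_\alpha=-E_\alpha$ in terms of the data already determined for the roots above $\alpha$.

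Finally, for the normalization of $k_\theta$, since $JE_\theta=Z_\theta$ has no root part I would use that $g$ is $J$-Hermitian together with the reality relation to write $g(Z_\theta,\overline{Z_\theta})=g(JE_\theta,\overline{JE_\theta})=g(E_\theta,\overline{E_\theta})=\|\theta\|^2/|\theta|^2$, where the last equality uses $\overline{E_\theta}=-E_{-\theta}$ and $g(E_\theta,E_{-\theta})=-\|\theta\|^2/|\theta|^2$. Comparing this with the explicit value $g(H_\theta+iIH_\theta,\overline{H_\theta+iIH_\theta})=2\|\theta\|^2$, which follows from $I$-Hermiticity and $g(iH_\theta,iH_\theta)=\|\theta\|^2$, yields $|k_\theta|^2=\tfrac{1}{2|\theta|^2}$.
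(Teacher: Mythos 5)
Your part (ii) and the normalization of $k_\theta$ are fine and essentially coincide with the paper's arguments (the paper gets (ii) from $\alpha(H)g(JE_\alpha,\t)=g([H,JE_\alpha],\t)=-g(JE_\alpha,[H,\t])=0$, which is your eigenweight observation in pairing form). The problem is the heart of part (i). Your claim that \eqref{cyclic} ``collapses to tautologies on the $\t^\C$-direction'' of $J$ is false, and the instance you rule out is exactly the one the paper uses. Take $X=E_\alpha$ and $Y=H_1,\,Z=H_2\in\t^\C\cap\m^{1,0}$. Then $[H_1,H_2]=0$ and the surviving terms of \eqref{cyclic} are $\alpha(H_2)\,g(JH_1,E_\alpha)-\alpha(H_1)\,g(JH_2,E_\alpha)$. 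You are right that each term pairs $J(\cdot)$ against an element of $\n^{1,0}$ and hence sees only the $\n^{0,1}$-component of $JH_i$; but $J$-Hermiticity of $g$ makes $g(J\cdot,\cdot)$ skew, i.e.\ $g(JA,B)=-g(A,JB)$, so $g(JH_i,E_\alpha)=-g(H_i,JE_\alpha)$ and the identity becomes $g(JE_\alpha,\alpha(H_2)H_1-\alpha(H_1)H_2)=0$ --- a genuine linear constraint on the $\t^\C$-component of $JE_\alpha$. The vectors $\alpha(H_2)H_1-\alpha(H_1)H_2$ span $\{v-iIv:\ v,Iv\in\Ker\alpha\cap\t\}$, and orthogonality to this space together with $g(iH_\alpha,\Ker\alpha\cap\t)=0$ (natural reductivity) forces the $\t^\C$-component to be proportional to $H_\alpha+iIH_\alpha$. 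In short, the $\t\to\n$ and $\n\to\t$ blocks of $J$ are not independent unknowns: $g$-skew-adjointness of $J$ couples them, which is precisely the transfer mechanism you declare unavailable.

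The workaround you build in its place does not close the gap. At your base case, $J^2E_\theta=-E_\theta$ reads $JZ_\theta=-E_\theta$, which says where $J$ sends $Z_\theta$ but says nothing about where $Z_\theta$ sits inside $\t^\C\cap\m^{0,1}$; that space has complex dimension $\tfrac{1}{2}\dim_\R\t$, in general larger than one, so nothing in your argument singles out the line $\C\,(H_\theta+iIH_\theta)$. The inductive step (``expand $J^2E_\alpha=-E_\alpha$ in terms of the data for higher roots'') is not carried out, and without the constraint above there is no evident reason it would determine the direction of $Z_\alpha$ either. So part (i) is not proved as written; the fix is simply to apply \eqref{cyclic} with two arguments in $\t^\C\cap\m^{1,0}$ as above.
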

\begin{proof} In order to prove (i) we first note that $iH_\alpha$
  lies in $\t$. We now apply \eqref{cyclic} taking
$X=E_\alpha$, $Y=H_1$ and $Z=H_2$ where
$H_1,H_2\in \t^\C\cap \m^{1,0}$. Thus we obtain
\[g(JE_\alpha,\alpha(H_2)H_1-\alpha(H_1)H_2)=0.
\]
The linear space $\span_\C\{\alpha(H_2)H_1-\alpha(H_1)H_2:\:H_1,H_2\:\in
\t^\C\cap \m^{1,0}\}$ coincides with $\{v-iIv:\:v,Iv\:\in (\Ker\ \alpha)\cap \t\}.$
This means that the $\t^\C$-component of $JE_\alpha$ is of the form
$\gamma (w+iIw)$ with $\gamma \in \C$ and $w \in \t$ is $g$-orthogonal
to $\Ker\, \alpha$. Since $g(iH_\alpha, \Ker\,\alpha)=0$ we can choose $w=iH_\alpha$
and the claim follows for a suitable $k_\alpha \in\C$.
The last assertion follows from the following computation
\begin{eqnarray*} g(E_\theta,E_{-\theta}) &=& |k_\theta|^2 g(H_\theta - iIH_\theta, H_\theta + i IH_\theta) =
2|k_\theta|^2 g(H_\theta,H_\theta) = \\
&=& 2|k_\theta|^2 g([E_\theta,E_{-\theta}],H_\theta) = 2|k_\theta|^2 |\theta|^2 g(E_\theta,E_{-\theta}).\end{eqnarray*}
As for (ii), we select $H\in \t_\l$ with $\alpha(H)\neq 0$ and
use the $\ad(\l)$-invariance of $J$ to compute
$$\alpha(H)\ g(JE_\alpha,\t) = g([H,JE_\alpha],\t) = g(JE_\alpha,[H,\t]) = 0.$$
\end{proof}
We note that $k_\theta$ is determined up to multiplication by a complex number of unit norm, since $J$ can be
chosen in the circle of complex structures in $\mathcal Q$ which are orthogonal to $I$.

\begin{lemma}
\label{lista}
\begin{itemize}
\item[(i)] If $\alpha, \beta \in R_\n^+$ and $\alpha +\beta \not \in
  R$, then $g(JE_\alpha,E_\beta)=0$.
\item[(ii)] If $\alpha, \beta \in R_\n^+$ and $\alpha +\beta = \gamma
  \in R^+$ with $\gamma_{|{\t_\l}} \not\equiv 0$, then $g(JE_\alpha,E_\beta)=0$.
\item[(iii)] If $\alpha, \beta \in R_\n^+$ and $\alpha +\beta = \gamma
  \in R^+$ with $\gamma_{|\t_\l}\equiv 0$, then $g(JE_\alpha,E_\beta)=
  2 k_{\gamma} \frac{\|\gamma\|^2}{|\gamma|^2}N_{\alpha,\beta}$.
\end{itemize}
\end{lemma}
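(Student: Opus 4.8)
The plan is to evaluate the cyclic-sum condition \eqref{cyclic} on root vectors and extract the three cases by exploiting the $\t^\C$-components of $JE_\alpha$ computed in Lemma \ref{t-component}. The natural choice is $X = E_\alpha$, $Y = E_\beta$, $Z = E_{-\gamma}$ (or a suitable $(1,0)$-vector replacing $E_{-\gamma}$), so that the bracket $[E_\alpha, E_\beta]_{\m^\C}$ carries the structure constant $N_{\alpha,\beta}$ when $\alpha + \beta = \gamma \in R$, and vanishes when $\alpha + \beta \notin R$. First I would write out \eqref{cyclic} with these inputs, recalling from the preliminary remarks that $g(E_\mu, E_\nu) = 0$ unless $\mu = -\nu$, so only a few terms of the cyclic sum survive.

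For part (i), when $\alpha + \beta \notin R$, the bracket $[E_\alpha, E_\beta]$ vanishes, and I would choose the third slot so that the only surviving term of the cyclic sum is the one containing $g(JE_\alpha, [E_\beta, Z])$; feeding this through $J$-Hermiticity and the orthogonality of root spaces should force $g(JE_\alpha, E_\beta) = 0$ directly. For part (ii), when $\gamma = \alpha + \beta$ restricts nontrivially to $\t_\l$, I would invoke Lemma \ref{t-component}(ii), which tells us that $JE_\gamma \in \n^\C$, hence has no $\t^\C$-component; combined with the $\ad(\l)$-invariance argument used at the end of Lemma \ref{t-component}, one selects $H \in \t_\l$ with $\gamma(H) \neq 0$ and contracts to kill $g(JE_\alpha, E_\beta)$, since this pairing is tied to the $\t^\C$-part of $JE_\gamma$ through \eqref{cyclic}.

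For part (iii), where $\gamma|_{\t_\l} \equiv 0$, the computation is genuinely quantitative. Here I would apply Lemma \ref{t-component}(i) to write $JE_\gamma = k_\gamma(H_\gamma + iIH_\gamma) \bmod \n^\C$, then plug $X = E_\alpha$, $Y = E_\beta$ into \eqref{cyclic} with a $\t^\C$-valued third argument, so that the surviving contribution is $g(JE_\alpha, E_\beta)$ paired against $[E_\alpha, E_\beta] = N_{\alpha,\beta} E_\gamma$. Tracking the factor $k_\gamma$ and the normalization $g(E_\gamma, E_{-\gamma}) = -\|\gamma\|^2/|\gamma|^2$ (from the preliminary remark) through the identity should yield the stated value $g(JE_\alpha, E_\beta) = 2 k_\gamma \frac{\|\gamma\|^2}{|\gamma|^2} N_{\alpha,\beta}$.

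The main obstacle I anticipate is bookkeeping in part (iii): one must carefully select which $(1,0)$-vector to put in the third slot of the cyclic sum so that all unwanted terms drop out by root-orthogonality, and then correctly match the $J$-image of $E_\gamma$ (expressed via $H_\gamma + iIH_\gamma$) against the Chevalley structure constant while respecting the sign and normalization conventions. The factor of $2$ and the precise coefficient $\|\gamma\|^2/|\gamma|^2$ will emerge only if the pairing $g(H_\gamma, H_\gamma) = |\gamma|^2 g(E_\gamma, E_{-\gamma})$ and the relation $[E_\gamma, E_{-\gamma}] = H_\gamma$ are threaded through consistently, exactly as in the verification of $|k_\theta|^2$ at the end of Lemma \ref{t-component}.
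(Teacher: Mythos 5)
Your overall strategy is the paper's: evaluate the cyclic condition \eqref{cyclic} on $(E_\alpha,E_\beta,Z)$ with $Z$ a $(1,0)$ toral vector, feed in Lemma \ref{t-component}, and use the normalization $g(E_\gamma,E_{-\gamma})=-\|\gamma\|^2/|\gamma|^2$. Part (iii) as you sketch it is exactly the paper's computation: take $Z=H'=H-iIH$ with $H\in\t$, $\gamma(H)\neq 0$ (such an $H$ exists because $\gamma\neq 0$ vanishes on $\t_\l$), get $\gamma(H')\,g(JE_\alpha,E_\beta)=-N_{\alpha,\beta}\,g(JE_\gamma,H')$, and close with Lemma \ref{t-component}(i) and $g(H_\gamma,H')=-\gamma(H')\|\gamma\|^2/|\gamma|^2$. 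That part is fine.

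Two points need repair. In (i), with $Z=H'\in\t^\C\cap\m^{1,0}$ it is not one but two terms of the cyclic sum that survive, and they combine via $g(JE_\beta,E_\alpha)=-g(JE_\alpha,E_\beta)$ into $(\alpha+\beta)(H')\,g(JE_\alpha,E_\beta)=0$; this only forces the vanishing when $\alpha+\beta$ does not annihilate $\t$. Since $\alpha+\beta\notin R$, this functional may well vanish identically on $\t$, and in that degenerate case the cyclic sum gives nothing: you must instead pick $H'\in\t_\l$ with $(\alpha+\beta)(H')\neq 0$ and use the $\Ad(L)$-invariance of $g$ and $J$ — precisely the fallback in the proof of Lemma \ref{lemma0}, which is the argument the paper cites for (i). In (ii), your mechanism as written does not parse: an element of $\t_\l$ lies in $\l$, not in $\m$, so it cannot occupy a slot of \eqref{cyclic}, and hence the cyclic sum cannot ``tie'' $g(JE_\alpha,E_\beta)$ to the $\t^\C$-part of $JE_\gamma$ via such an $H$. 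The correct (and simpler) argument, which you partly invoke, bypasses \eqref{cyclic} and Lemma \ref{t-component}(ii) entirely: for $H\in\t_\l$ with $\gamma(H)\neq 0$, the $\ad(\l)$-invariance of $J$ and of $g$ gives $\alpha(H)\,g(JE_\alpha,E_\beta)=-\beta(H)\,g(JE_\alpha,E_\beta)$, whence $\gamma(H)\,g(JE_\alpha,E_\beta)=0$.
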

\begin{proof}
The first assertion can be easily proved with the same argument used
in Lemma \ref{lemma0}. In order to prove (ii), let $H\in \t_\l$ with $\gamma(H)\neq 0$ and use the
$\ad(\l)$-invariance of $J$ to compute
\begin{eqnarray*}\alpha(H)g(JE_\alpha,E_\beta) & = & g(J[H,E_\alpha],E_\beta)=
g([H,JE_\alpha],E_\beta)=-g(JE_\alpha,[H,E_\beta])\\
 & =& -g(JE_\alpha,\beta(H)E_\beta)=
-\beta(H)g(JE_\alpha,E_\beta)
\end{eqnarray*}
so that the claim follows.\\
As for (iii), we select $H\in \t$ with $\gamma(H)\neq 0$ and set $H' = H - iIH$. Using \eqref{cyclic} we have
\[
\gamma(H')g(JE_\alpha,E_\beta)=g(JH',[E_\alpha,E_\beta])=N_{\alpha,\beta}\: g(JH',E_\gamma)
=-N_{\alpha,\beta}\: g(JE_\gamma,H').
\]
Applying part (i) of the previous Lemma
we get
\begin{eqnarray*}
\gamma(H')g(JE_\alpha,E_\beta) & = & -N_{\alpha,\beta}\:
k_\gamma(g(H_\gamma,H')+ig(IH_\gamma,H')) =
-2N_{\alpha,\beta}k_\gamma\, g(H_\gamma,H') \\
 & = & 2\,\gamma(H')\,N_{\alpha,\beta}\,k_\gamma\, \frac{\|\gamma\|^2}{|\gamma|^2}\,.
\end{eqnarray*}
and the claim follows.
\end{proof}
We now consider the highest root $\theta$ and define $R(\theta) = \{\alpha\in R_\n^+;\ \theta-\alpha\in R\}$.
Note that $\alpha\in R_\n^+$ lies in $R(\theta)$ if and only if $\alpha\neq \theta$ and $\mathcal B(\alpha,\theta) \neq 0$. Moreover
 if $\alpha \in R(\theta)$, then $\theta-\alpha\in R_\n$: indeed, if
 $\theta-\alpha = \beta\in R_\l$, we have
$\theta-\beta = \alpha\in R$, hence $[E_\theta,E_{-\beta}]\neq 0$, contradicting the fact that $[\l,E_\theta] = 0$ (see
Lemma \ref{Et}).

\bl
\label{JE}  If $\alpha\in R(\theta)$, then $JE_\alpha \in \n^\C$.
\el

\bp  Suppose $JE_\alpha$ has a component along $\t^\C$. Using Lemma
\ref{t-component}, we compute
$$0 = g(JE_\alpha, JE_{-\theta}) =  k_\alpha k_\theta\ g(H_\alpha + iI H_\alpha, H_\theta + i IH_\theta) =
2 k_\alpha k_\theta\ g(H_\alpha,H_\theta) =    $$
$$ = 2 k_\alpha k_\theta\ g([E_\alpha,E_{-\alpha}],H_\theta) = -2 k_\alpha k_\theta\ \alpha(H_\theta)
g(E_\alpha, E_{-\alpha}).$$
Since $\alpha\in R(\theta)$ we have that $\alpha(H_\theta)\neq 0$. Therefore $k_\alpha = 0$ and the claim follows.
\ep

\bl
If $\alpha\in R(\theta)$, then $g(JE_\alpha,E_\beta) = 0$ for every $\beta\in R_\n^+$ unless $\alpha + \beta = \theta$.
\el

\bp
By Lemma \ref{lista} (i), it is enough to take $\beta\in R_\n^+$ so that $\alpha+\beta = \gamma\in R$. Moreover by Lemma
\ref{lista} (iii), we may suppose that $\gamma|_{\t_\l}\equiv 0$, hence $\gamma|_{\t}\not\equiv 0$. Choose $H\in \t^{1,0}$
with $\gamma(H) \neq 0$. Now, if $\gamma\neq \theta$, by Lemma \ref{JE} we have that $g(J[E_\alpha,E_\beta],\t) = 0$.
Equation (\ref{cyclic}) with $X = E_\alpha$, $Y = E_\beta$ and $Z= H$ implies
$\gamma(H)\ g(JE_\alpha,E_\beta) = 0$ and the claim follows.
\ep

The previous Lemma says that for every $\alpha \in R_\n^+$ one has $JE_\alpha=\lambda_\alpha E_{\alpha-\theta}$ for
some $\lambda_\alpha \in \C\setminus\{0\}$. Using Lemma \ref{lista}
(iii) we have
\begin{equation}
\label{lambda}
\lambda_\alpha g(E_{\alpha-\theta},E_{\theta-\alpha}) =
g(JE_\alpha,E_{\theta-\alpha}) = 2 k_\theta \frac{\|\theta\|^2}{|\theta|^2}
N_{\alpha,\theta-\alpha}\,.
\end{equation}
Using the fact that $g$ is naturally reductive we have
\[
g(E_{\alpha-\theta},E_{\theta-\alpha})=
-\frac{N_{\alpha,\theta-\alpha}}{N_{\alpha,-\theta}}g(E_{-\theta},E_\theta)
= \frac{N_{\alpha,\theta-\alpha}}{N_{\alpha,-\theta}}\frac{\|\theta\|^2}{|\theta|^2}\,,
\]
which, combined with \eqref{lambda} gives $$JE_\alpha = 2 k_\theta
N_{\alpha,-\theta}E_{\alpha-\theta}\,.$$

Let $\mathfrak{s}(\theta)^\C$ be the subalgebra of $\g^\C$ generated by
$E_\theta$ and $E_{-\theta}$, and define $\mathfrak{s}(\theta)=\mathfrak{s}(\theta)^\C\cap
\g$. Obviously $\mathfrak{s}(\theta)\cong\sp(1)$. Set also
$$Z_\theta = I(i H_\theta)\in \t,\quad \mathfrak{u}(\theta) = \mathfrak{s}(\theta) \oplus \R\ Z_\theta.$$
Then $\mathcal Q$ leaves $\mathfrak{u}(\theta)$ invariant and $\mathcal{Q}_{|\u(\theta)}$ is
determined by the formula $JE_\theta = k_\theta (H_\theta +iIH_\theta)$.
We also define $\mathfrak{f}_\theta=\g \cap \bigoplus_{\alpha\in R(\theta)}
(\g_\alpha \oplus \g_{-\alpha})$ and $\mathfrak{c}_\theta = \g \cap \bigoplus_{\alpha\in C(\theta)}
(\g_\alpha \oplus \g_{-\alpha})$, where $C(\theta) = \{\alpha\in R_\n^+;\ (\theta,\alpha) = 0\} =
R_\n^+\setminus (R(\theta)\cup \{\theta\})$,  so that
$$\n \oplus \span_\R\{iH_\theta, Z_\theta\} = \mathfrak{u}(\theta) \oplus \mathfrak{f}_\theta \oplus \mathfrak{c}_\theta.$$
\begin{proposition}
\label{ad}
%Let $\U_\theta = 2(\overline{k}_\theta E_\theta - k_\theta
%E_{-\theta})\in (\theta)$.
The hypercomplex structure
$\mathcal{Q}$ leaves $\mathfrak{f}_\theta$ invariant and
$\mathcal{Q}_{|\mathfrak{f}_\theta} = \ad(\mathfrak{s}(\theta))_{|\mathfrak{f}_\theta}$.
\end{proposition}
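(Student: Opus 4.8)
The plan is to decompose $\f_\theta^\C$ into standard modules for the $\sl_2$-triple generated by $E_{\pm\theta}$ and then to match $I$ and $J$ on $\f_\theta$ with the adjoint action of two orthogonal generators of $\mathfrak{s}(\theta)$; since $\mathcal Q_{|\f_\theta}$ and $\ad(\mathfrak{s}(\theta))_{|\f_\theta}$ are both two-spheres of complex structures, matching two of them (and hence the third, via $K=IJ$) yields the equality. First I would describe the module structure of $\f_\theta^\C=\bigoplus_{\alpha\in R(\theta)}(\g_\alpha\oplus\g_{-\alpha})$. Because $\theta$ is the highest root, for $\alpha\in R(\theta)$ one has $\theta+\alpha\notin R$ and $\alpha(H_\theta)=1$, so the $\theta$-string through $\alpha$ is exactly $\{\alpha-\theta,\alpha\}$, and $\alpha\mapsto\theta-\alpha$ is an involution of $R(\theta)$. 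Hence $[E_\theta,E_\alpha]=0$, $[E_{-\theta},E_\alpha]=N_{-\theta,\alpha}E_{\alpha-\theta}$, and $\f_\theta^\C=\bigoplus_{\alpha\in R(\theta)}(\g_\alpha\oplus\g_{\alpha-\theta})$ is a direct sum of two-dimensional standard $\mathfrak{s}(\theta)^\C$-modules. In particular $\f_\theta^\C$ is $\ad(\mathfrak{s}(\theta))$-invariant and for every unit $\xi\in\mathfrak{s}(\theta)$ the endomorphism $\ad(\xi)$ restricts to a complex structure on $\f_\theta$.

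Next I would verify that $\mathcal Q$ leaves $\f_\theta$ invariant. The structure $I$ acts on each root space $\g_\beta$ by $\pm i$ according to whether $\beta\in R_\n^+$ or $R_\n^-$, so it preserves $\f_\theta$; and the formula $JE_\alpha=2k_\theta N_{\alpha,-\theta}E_{\alpha-\theta}$ for $\alpha\in R(\theta)$, together with its conjugate, shows $J(\g_{\pm\alpha})\subseteq\f_\theta^\C$. Thus $J$, and hence $K=IJ$ and the whole sphere $\mathcal Q$, preserve $\f_\theta$.

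The identification is then carried out generator by generator. On a summand $\g_\alpha\oplus\g_{\alpha-\theta}$ one has $\ad(iH_\theta)=\diag(i,-i)=I$, using $\alpha(H_\theta)=1$ and $(\alpha-\theta)(H_\theta)=-1$; since $iH_\theta\in\mathfrak{s}(\theta)$ this gives $I_{|\f_\theta}=\ad(iH_\theta)_{|\f_\theta}$. For $J$ I would test the single element $\xi_J=\tfrac{1}{2k_\theta}E_\theta-2k_\theta E_{-\theta}\in\mathfrak{s}(\theta)^\C$: one computes $\ad(\xi_J)E_\alpha=-2k_\theta N_{-\theta,\alpha}E_{\alpha-\theta}=JE_\alpha$ and $\ad(\xi_J)E_{\alpha-\theta}=\tfrac{1}{2k_\theta}N_{\theta,\alpha-\theta}E_\alpha$, to be compared with $JE_{\alpha-\theta}=-\tfrac{1}{2k_\theta N_{\alpha,-\theta}}E_\alpha$ (forced by $J^2=-\id$). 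Both comparisons hold for every $\alpha\in R(\theta)$ precisely because the Jacobi identity yields $N_{-\theta,\alpha}N_{\theta,\alpha-\theta}=\alpha(H_\theta)=1$, so that the coefficients of $\xi_J$ do not depend on $\alpha$. Hence $J_{|\f_\theta}=\ad(\xi_J)_{|\f_\theta}$, and since $J$ is real and $\mathfrak{s}(\theta)^\C$ acts faithfully on $\f_\theta^\C$, the element $\xi_J$ is fixed by the compact conjugation, i.e. $\xi_J\in\mathfrak{s}(\theta)$. Finally $iH_\theta$ and $\xi_J$ are orthogonal generators of $\mathfrak{s}(\theta)\cong\sp(1)$, so $\xi\mapsto\ad(\xi)$ sends the unit sphere of $\mathfrak{s}(\theta)$ onto a two-sphere of complex structures on $\f_\theta$ containing $I$, $J$ and $K$; this sphere is exactly $\mathcal Q_{|\f_\theta}$, which proves $\mathcal Q_{|\f_\theta}=\ad(\mathfrak{s}(\theta))_{|\f_\theta}$.

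The step I expect to be the crux is the $\alpha$-independence of the coefficients of $\xi_J$, which rests on the identity $N_{-\theta,\alpha}N_{\theta,\alpha-\theta}=\alpha(H_\theta)=1$ and hence on the fact that $\theta$ is the highest root (so that every relevant $\theta$-string has length two and $\alpha(H_\theta)=1$). A secondary bookkeeping point is that the resulting $\xi_J$ must lie in the compact real form $\mathfrak{s}(\theta)$; this reality is guaranteed above by faithfulness, and is consistent with the normalization $|k_\theta|^2=\tfrac{1}{2|\theta|^2}$ established in Lemma \ref{t-component}.
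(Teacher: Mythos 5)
Your proposal is correct and takes essentially the same route as the paper: the authors likewise prove the statement by exhibiting explicit elements $\tau_\theta\in\R\, iH_\theta$ and $\sigma_\theta=2(\overline{k}_\theta E_\theta-k_\theta E_{-\theta})$ of $\mathfrak{s}(\theta)$ with $I=\ad(\tau_\theta)$ and $J=\ad(\sigma_\theta)$ on $\mathfrak{f}_\theta$, using exactly the two facts you isolate (that $\theta+\alpha\notin R$ for $\alpha\in R(\theta)$ and that the $\theta$-string through $\alpha$ has length two). The only differences are cosmetic: your coefficients for $\xi_J$ and $iH_\theta$ reflect the coroot normalization $\alpha(H_\theta)=1$ rather than the paper's $\alpha(H_\theta)=\mathcal B(\alpha,\theta)$, and you secure reality of $\xi_J$ via faithfulness of the $\mathfrak{s}(\theta)^\C$-action where the paper simply writes down a manifestly real $\sigma_\theta$.
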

\bp
We will show that there exist $\sigma_\theta, \tau_\theta \in
\s(\theta)$ such that for every $X \in \mathfrak{f}_\theta$ we have $JX=
[\sigma_\theta,X]$ and $IX=[\tau_\theta,X]$.
Let $\sigma_\theta = 2(\overline{k}_\theta E_\theta - k_\theta
E_{-\theta})$ and $\tau_\theta = \frac{2iH_\theta}{|\theta|^2}$.
The claim is a consequence of the following direct computations
\begin{eqnarray*}
[\sigma_\theta,E_\alpha]  & = &  -2k_\theta[E_{-\theta},E_\alpha] = -2
k_\theta N_{-\theta,\alpha}E_{\alpha-\theta} = JE_\alpha \\
%{[\sigma_\theta,E_{-\alpha}]} & = & 2\overline{k}_\theta[E_\theta,E_{-\alpha}] = 2
%\overline{k}_\theta N_{\theta,-\alpha}E_{\theta-\alpha} =
%-\overline{JE_\alpha} = J E_{-\alpha}\,.
%\end{eqnarray*}
%\begin{eqnarray*}
{[\tau_\theta,E_\alpha]} & = & \frac{2i}{|\theta|^2}[H_\theta,E_\alpha]
                       = 2\frac{{\mathcal B}(\alpha,\theta)}{|\theta|^2}iE_\alpha
                       = iE_\alpha \, \\
%{[\tau_\theta,E_{-\alpha}]} & = & \frac{2i}{|\theta|^2}[H_\theta,E_{-\alpha}]
%                          = -2\frac{(\alpha,\theta)}{|\theta|^2}iE_{-\alpha}
%                          = -iE_{-\alpha} \,,
\end{eqnarray*}
where in the last equation we have used the fact that
$2\frac{{\mathcal B}(\alpha,\theta)}{(\theta,\theta)}=1$ since the
$\theta$-string of $\alpha$ is formed only by $\alpha-\theta$ and
$\alpha$ (see e.g. \cite{Helgason}).
\ep

We now set $\theta_1:=\theta, \, k_1:=k_\theta$ and define inductively
the roots $\theta_j$ as follows.
\begin{itemize}
\item[1)] $\theta_{j+1}$ is maximal in $C(\theta_j)$,
  i. e. $\theta_{j+1}+\alpha \not \in R$ for every $\alpha \in
  C(\theta_j)$;
\item[2)] $C(\theta_{j+1}):=\{\alpha \in C(\theta_j)\colon
  \theta_{j+1}-\alpha \not \in R \}$
\end{itemize}
 We then set $R(\theta_{j+1}) = \{\alpha\in C(\theta_j);\ \theta_{j+1}
 - \alpha \in R\}$ and $\f_{j+1} = \g \cap \bigoplus_{\alpha\in R(\theta_{j+1})}
(\g_\alpha \oplus \g_{-\alpha})$. Moreover we define
$\s_{j+1} \cong \sp(1)$ as the real subalgebra generated by
$E_{\theta_{j+1}}, E_{-\theta_{j+1}}$ (note that $\s_1 = \s(\theta)$) and $\u_{j+1} =
\s_{j+1} \oplus \R Z_{j+1}$ where $Z_{j+1} =
iIH_{\theta_{j+1}} \in \t$. \\
Now we have
%The following proposition can be proved by induction using the same
%arguments as in the previous Lemmas and Proposition \ref{ad}.
\begin{proposition}\label{Joyce}
There exists a set of roots $\theta_1,\ldots,\theta_\ell$ such that for $j=1,\ldots,\ell$ we have:
\begin{itemize}
\item[(i)]  the subset $C(\theta_\ell)$ is empty;
\item[(ii)] the hypercomplex structure $\mathcal{Q}$ leaves
$\f_j$ and $\u_j$ invariant. In particular
$\mathcal{Q}_{|\f_j}=\ad(\s_j)_{|\f_j}$ and we have
$JE_{\theta_j}=k_j (H_{\theta_j} + iIH_{\theta_j}) $
for a suitable $k_j \in \C$  (hence $\l$
centralizes $\s_j$);
\item[(iii)] there is a $g$-orthogonal decomposition
$\g = \l \oplus \tilde\t \oplus \bigoplus_{j=1}^{\ell} \u_j \oplus \bigoplus_{j=1}^{\ell} \f_j$, where
$\tilde\t$ lies in $\t$ and is $\mathcal Q$-invariant. Moreover $[\l,\u_j] = 0$, $[\u_j,\u_k]=0$ for $j\neq k$ and
$[\u_j,\f_j]\subseteq \f_j$;
\item[(iv)] the root $\theta_1$ can be chosen as the
highest root $\tilde{\theta}$ of the whole root system $R$ of $\g$
with respect to an ordering such that $R^+ \supseteq R^+_\n$.
\end{itemize}
\end{proposition}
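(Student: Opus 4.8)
The plan is to prove the proposition by induction on $\#R_\n^+$, the engine being that after splitting off $\u_1\oplus\f_1$ the remaining data is again a homogeneous space of exactly the same type, so that the whole analysis carried out through Proposition \ref{ad} can be reapplied. The base case $j=1$ is precisely that analysis: with $\theta_1=\theta$ maximal in $R_\n^+$, Lemmas \ref{Et}, \ref{t-component}, \ref{lista}, \ref{JE} and the computation leading to Proposition \ref{ad} already give that $\mathcal Q$ preserves $\u_1$ and $\f_1$, that $\mathcal Q_{|\f_1}=\ad(\s_1)_{|\f_1}$, that $JE_{\theta_1}=k_1(H_{\theta_1}+iIH_{\theta_1})$, and that $\l$ centralizes $\s_1$.

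For the inductive step I would pass to the centralizer $\g^{(j)}:=\g\cap\z_{\g^\C}(\s_1^\C\oplus\cdots\oplus\s_j^\C)$. Since $\l$ centralizes each $\s_i$ by the inductive hypothesis, $\l\subseteq\g^{(j)}$ and $\g^{(j)}=\l\oplus(\g^{(j)}\cap\m)$ is a reductive decomposition of the same kind. The first task is to check that the positive roots of $\g^{(j)}$ lying in $R_\n$ are exactly $C(\theta_j)$; this amounts to verifying, via the length-two string condition used in Proposition \ref{ad} together with the maximality 1), that the iterative set $\{\alpha\in C(\theta_j):\theta_{j+1}-\alpha\notin R\}$ agrees with the orthogonality description $\{\alpha\in C(\theta_j):(\theta_{j+1},\alpha)=0\}$, and that consequently the chosen roots are pairwise strongly orthogonal, $\theta_j\pm\theta_k\notin R$ for $j\neq k$. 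Because $\mathcal Q$ preserves each $\u_i\oplus\f_i$ and $g$ is hyper-Hermitian, $\mathcal Q$ preserves the $g$-orthogonal complement of $\bigoplus_{i\le j}(\u_i\oplus\f_i)$, which one checks to be $\g^{(j)}\cap\m$; and since for $X,Y,Z\in\g^{(j)}$ the bracket stays in $\g^{(j)}=\l\oplus(\g^{(j)}\cap\m)$, both \eqref{red} and \eqref{cyclic} restrict verbatim, so the restricted metric is again naturally reductive and hyper-Hermitian and the cyclic identity persists. Thus $(\g^{(j)}\cap\m,\mathcal Q)$ is a strictly smaller instance with $C(\theta_j)$ in the role of $R_\n^+$, and applying the base case produces $\theta_{j+1}$ maximal in $C(\theta_j)$ together with $\u_{j+1},\f_{j+1}$ enjoying all the properties in (ii).

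Termination (i) is then clear: each step removes $\theta_{j+1}$ and $R(\theta_{j+1})$ from $C(\theta_j)$, so these sets strictly decrease and $C(\theta_\ell)=\emptyset$ after finitely many steps. Assertion (iii) is obtained by collecting the pieces split off at every stage, with $\tilde\t\subseteq\t$ the leftover torus; $\tilde\t$ is $\mathcal Q$-invariant because $\mathcal Q$ preserves $\m$ and every other summand. The bracket relations follow from the construction: $[\u_j,\u_k]=0$ for $j\neq k$ from strong orthogonality together with $[\t,\t]=0$ and $Z_i\in\t$; $[\l,\u_j]=0$ from $\l$ centralizing $\s_j$, $[\l,\t]=0$ and $Z_j\in\t$; and $[\u_j,\f_j]\subseteq\f_j$ from $\mathcal Q_{|\f_j}=\ad(\s_j)_{|\f_j}$. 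For (iv) I would fix the positive system of $R$ by a regular element adapted to the ordering on $R_\n^+$ and dominant for $\theta_1$; using that $\theta_1$ is orthogonal to $R_\l$ (Lemma \ref{Et}) and that each $\theta_1$-string has length two (Proposition \ref{ad}), $\theta_1$ is a maximal long root that may be taken as the highest root $\ttt$ of $R$ for an ordering with $R^+\supseteq R_\n^+$.

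I expect the reduction in the inductive step to be the \emph{main obstacle}. The delicate point is not the formal relabelling but verifying that the centralizer genuinely inherits every hypothesis: that the root system of $\g^{(j)}$ is precisely the $\{\theta_1,\dots,\theta_j\}$-orthogonal complement in $R$ — which is exactly where the highest-root property from (iv) and the length-two string condition enter — and that the restriction of $g$ to $\g^{(j)}\cap\m$ remains naturally reductive and hyper-Hermitian with \eqref{cyclic} intact, so that the earlier lemmas truly reapply rather than merely resembling the situation in which they were proved.
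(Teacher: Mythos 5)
For parts (i)--(iii) your inductive scheme is essentially what the paper does: its entire proof of these items is the single sentence that they ``can be proved by induction using exactly the same arguments as in the previous Lemmas and in Proposition \ref{ad}''. Your elaboration via the centralizers $\g^{(j)}$ --- checking that the restricted metric is still naturally reductive and hyper-Hermitian, that \eqref{cyclic} persists on $\g^{(j)}\cap\m$, and that $C(\theta_j)$ plays the role of $R_\n^+$ at the next stage --- is a faithful, indeed more careful, rendering of the intended argument, and your identification of the delicate points (strong orthogonality of the $\theta_j$, inheritance of the hypotheses by the centralizer) is exactly right.

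The divergence is in part (iv), which is the only item the paper proves in detail, and there your sketch is both different from the paper's and incomplete. The point to be settled is that a priori the highest root space $\g_{\ttt}$ could lie in $\l^\C$, in which case no maximal root of $R_\n^+$ could equal $\ttt$. The paper rules this out by contradiction using almost-effectiveness: if $E_{\ttt}\in\l^\C$, then for $\alpha\in R_\n^-$ one has $JE_\alpha\in\t^\C+\n^{1,0}$, whence $[E_{\ttt},JE_\alpha]=0$ (because $\ttt+R_\n^+\not\subset R$ and $[\l,\t]=0$) and, by the $\ad(\l)$-equivariance of $J$, $[E_{\ttt},E_\alpha]=-J[E_{\ttt},JE_\alpha]=0$; hence $[E_{\ttt},\m^\C]=0$ and $\exp_G(E_{\ttt}-E_{-\ttt})$ would act trivially, contradicting almost effectiveness. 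Your route instead tries to show directly that the chosen $\theta_1$ is the highest root. The correct ingredients for that are: $\theta_1+\alpha\notin R$ for all $\alpha\in R_\n^+$ (maximality, plus the fact that a sum of two roots of $R_\n^+$ cannot lie in $R_\l$) and $\theta_1+\beta\notin R$ for all $\beta\in R_\l$ (Lemma \ref{Et}: $E_{\theta_1}$ is centralized by $\l$); together these make $\theta_1$ dominant and poset-maximal for any positive system $R^+\supseteq R_\n^+$, hence equal to the highest root of its irreducible factor. That argument can be completed and would bypass the effectiveness argument altogether, but it is not the one you wrote: the length-two string property you invoke is irrelevant to this point, the claim that $\theta_1$ is a \emph{long} root is neither justified nor needed, and ``fixing a regular element dominant for $\theta_1$'' builds the desired conclusion into the choice of ordering rather than deriving it. As written, part (iv) is a gap; it can be closed either by the dominance argument just indicated or by the paper's effectiveness argument.
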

\begin{proof}
The first three statements can be proved by induction using exactly the same
arguments as in the previous Lemmas and in Proposition \ref{ad}.
The only new statement to prove is (iv).
To do this it is enough to show that the highest root space $\g_\ttt$
does not belong to $\l^\C$. Suppose now by contradiction that $E_\ttt \in \l^\C$.
Given $\alpha\in R_\n^-$, we have $JE_{\alpha} = H + \sum_{\beta\in R_\n^+}c_\beta E_{\beta}$ for some
$H\in \t^\C$ and $c_\beta\in \C$ and therefore $[E_\ttt,E_\alpha]= -J[E_\ttt, JE_\alpha] = 0$ because
$\ttt+R_\n^+\not\subset R$ and $[\l,\t]=0$.
 Hence $[E_\ttt,\n]= 0$ and therefore $[E_\ttt,\m^\C]= 0$. But this cannot
happen otherwise $\exp_G(E_\ttt-E_{-\ttt})$ would act trivially on
$M$, contradicting the (almost) effectiveness of the $G$-action.\end{proof}

Note that the decomposition obtained above matches with decomposition
\eqref{Jd} if we take $\mathfrak{b}_k = \l \oplus \tilde\t \oplus \z$,
where $\z$ is the center of $\bigoplus_{j=1}^{\ell} \u_j$.
We also note that we have the following necessary condition: if
$\z_\ell$ is the center of the centralizer in $\g$ of
$\{s(\theta_1),\ldots,s(\theta_\ell)\}$, then
\begin{equation} \label{cnec}\dim \z_\ell \geq \ell.\end{equation}

\subsection{Proof of Theorem \ref{MainThm}, part (2)}
We first prove the claim in the case in which $G$ is simple, using Proposition \ref{Joyce} and
condition \eqref{cnec}. \par
Since the root $\theta_1$ can be chosen as the
highest root $\tilde{\theta}$ of the whole root system, we can start
from the ``Wolf decomposition''
of $\g$ with respect $\theta_1=\ttt$:
\[
\g = \s(\theta_1) \oplus \z_\g(\s(\theta_1)) \oplus \m_1
\]
where $\m_1$ is identified with the tangent space of the corresponding
Wolf space.\\
By a case-by-case inspection for simple groups it is not difficult to see that
for every set of strongly orthogonal roots $\theta_1=\ttt,\ldots,\theta_\ell$
of $\g$, we have $\dim\z_\ell < \ell$ unless
$\g$ is of type $A_n$. If $\g=\su(n)$ we have indeed $\dim \z_\ell =
\ell$ for every choice of  $\theta_1=\ttt,\ldots,\theta_\ell$. (see
also \cite[Proposition 1]{PP}).
\\
Suppose now that $\g=\g_1\oplus\ldots\oplus\g_r$ where the $\g_j$'s
are simple Lie algebras. The set of roots $\Theta = \{\theta_1,\ldots,\theta_\ell\}$ is the disjoint
union of the subsets $\Theta_j$ of all roots in $\Theta$ belonging to $\g_j$. Now $\z_\ell$ splits as
a direct sum of the centers $\z_j$ of the centralizers in $\g_j$ of the subalgebras generated by the roots in
$\Theta_j$. Then $\dim \z_\ell = \sum_{j=1}^r \dim \z_j < \sum_{j=1}^r \sharp\Theta_j = \ell$ if at least one
factor of $\g$ is not of type $A_n$ by the previous discussion.

\subsection{Proof of Proposition \ref{restr}} (1) Suppose that $Y$ is reduced to a point. For any $I\in \mathcal Q$ the
corresponding Tits fibration $\pi$ has a typical fiber that is pointwisely fixed by the isotropy $L$, hence trivial.
This means that $M$ is a flag manifold with an invariant hypercomplex structure. If we decompose $\g = \l + \m$ with
$\m$ an $\ad(\l)$-invariant subspace, it is known that the $\ad(\l)$-irreducible submodules $\m_j$ ($j=1,\ldots,k$)
of $\m$ are mutually inequivalent (see e.g. \cite{S}) and therefore $\mathcal Q$-invariant. Now $\l$ has a non trivial
center $\c$ and there is a submodule, say $\m_1$, such that $\ad(\c)|_{\m_1}$ is not trivial. Then using the
irreducibility of $\m_1$, we see that $\mathcal Q|_{\m_1}$ belongs to $\ad(\c)|_{\m_1}$, contradicting the fact that
the $\mathcal Q|_{\m_1}$ contains anti-commuting elements. Therefore $Y$ has positive dimension and is $\mathcal Q$-invariant.
Since $L$ is not trivial, we see that $Y$ is also a proper submanifold. \par
(2) Suppose now that the restriction of $g$ to $Y$ is hyper-Hermitian and consider the decomposition
$\g = \l + \t + \n$ as in section \ref{ics}, relative to some $I\in \mathcal Q$. Note that $[\l,\t]=0$
means that $\t$ projects to a subspace of $T_{[eL]}Y$ and therefore $g|_{\t\times\t}$ is $I$-Hermitian. Now $\n^\C$
is a sum of root spaces w.r.t. the Cartan subalgebra $(\t_\l + \t)^\C$ and a simple computation using the natural
reductiveness and the $\ad(\l)$-invariance of $g$ shows that $g(E_\alpha,E_\beta) = 0$ for every roots $\alpha,\beta$
with $\alpha+\beta\neq 0$. Our claim now follows form the fact that $g(IE_\alpha,IE_{-\alpha}) = g(E_\alpha,E_{-\alpha})$
for every root $\alpha$.

\end{document}